\newtheorem{thm}{Theorem}[section]
\newtheorem{lem}[thm]{Lemma}
\theoremstyle{definition}
\newtheorem{rmk}[thm]{Remark}
\title{{\Large\bf {Infinitely many homoclinic solutions for a class
of subquadratic second-order Hamiltonian systems}}
\thanks{Supported by the National Natural Science Foundation of China (NSFC) under
Grants No.11371252 and No.11501369; Research and Innovation Project of Shanghai Education Committee under Grant No.14zz120; Yangfan Program of Shanghai (14YF1409100); Chen Guang Project(14CG43) of Shanghai Municipal Education Commission, Shanghai Education Development Foundation and the Research Program of Shanghai Normal University (SK201403) and Shanghai Gaofeng Project for University Academic Program Development.}}
\author{\normalsize  Xiang Lv\thanks{E-mail:lvxiang@shnu.edu.cn.}
\\
\normalsize  {\it Department of Mathematics, Shanghai Normal
University, Shanghai 200234, PR China}\\ }
\date{}
\begin{document}
\maketitle \noindent\hrulefill \newline \noindent{\bf Abstract :}
\par In this paper, we mainly consider the existence of infinitely many homoclinic solutions for a class
of subquadratic second-order Hamiltonian systems
$\ddot{u}-L(t)u+W_u(t,u)=0$, where $L(t)$ is not necessarily positive
definite and the growth rate of potential function $W$ can be in $(1,3/2)$. Using the variant fountain theorem, we obtain the existence of infinitely many homoclinic solutions for the second-order
Hamiltonian systems.\\
\noindent{\bf{MSC}:} primary 34C37; 70H05; 58E05

\noindent{\bf{Keywords}:} Homoclinic solutions; Hamiltonian
systems; Variational methods

\hrulefill
\section{Introduction and
main results}
The aim of this paper is to study the following second-order Hamiltonian systems
$$\ddot{u}-L(t)u+W_u(t,u)=0,\quad \forall\ t\in\mathbb{R} \eqno(\mbox{HS})$$
where $u=(u_1,u_2,\ldots,u_N)\in\mathbb{R}^N$, $W\in
C^1(\mathbb{R}\times\mathbb{R}^N,\mathbb{R})$ and $L\in
C(\mathbb{R},\mathbb{R}^{N\times N})$ is a symmetric matrix-valued
function. We usually say that a solution $u$ of (HS) is
homoclinic (to 0) if $u\in C^2(\mathbb{R},\mathbb{R}^N)$,
$u(t)\to 0$ and $\dot u(t)\to 0$ as $t\to \pm\infty$. Furthermore, if $u\not\equiv0$, then $u$ is called nontrivial.

\par In the applied sciences, Hamiltonian systems
can be used in many practical problems regarding gas dynamics, fluid
mechanics and celestial mechanics. It is clear
that the existence of homoclinic solutions is one of the most important problems in the
theory of Hamiltonian systems. Recently, more and more mathematicians have paid their attention to the existence and
multiplicity of homoclinic orbits for Hamiltonian systems, see [1-21].

\par For the case of that $L(t)$ and $W(t,x)$ are either independent
of $t$ or periodic in $t$, there have been several excellent results, see
\cite{AC,AR,CM,DG1,IJ,LLJ,MW,OT,OW,R}. More precisely, in the paper
\cite{R}, Rabinowitz has proved the existence of homoclinic orbits as
a limit of $2kT$-periodic solutions of (HS). Later, using the same
method, several results for general Hamiltonian systems were
obtained by Izydorek and Janczewska \cite{IJ}, Lv et al. \cite{LLJ}.

\par If $L(t)$ and $W(t,x)$ are not periodic with respect to $t$, it will become more difficult to consider the existence
of homoclinic orbits for (HS). This problem is quite different from the case mentioned above, due to the lack of compactness of the Sobolev embedding.
In \cite{RT}, Rabinowitz and Tanaka investigated system (HS) without
periodicity, both for $L$ and $W$. Specifically, they
assumed that the smallest eigenvalue of $L(t)$ tends to $+\infty$ as
$|t|\to \infty$, and showed that system (HS)
admits a homoclinic orbit by using a variant of the Mountain Pass theorem
without the Palais-Smale condition. Inspired by the work of Rabinowitz and Tanaka \cite{RT}, many results \cite{CES,D,KL,LLY,OT,OW,SCN,ZL,ZY}
were obtained for the case of aperiodicity. Most of them were presented under the following condition
that $L(t)$ is positive definite for all $t\in\mathbb{R}$,
$$(L(t)u,u)>0,\qquad \forall\ t\in\mathbb{R}\ \mbox{and}\ u\in\mathbb{R}^N\backslash\{0\}.$$
\par Motivated by \cite{D,ZL}, in this article we will study the existence of infinitely many homoclinic solutions for (HS), where
$L(t)$ is not necessarily positive definite for all $t\in\mathbb{R}$ and the growth rate of potential function $W$ can be in $(1,3/2)$. The main tool is the variant fountain theorem established in \cite{Z}. Our main results are the following theorems.

\begin{thm} Assume that $L$ and $W$ satisfy the following conditions:\\
(L1) There exists an $\alpha<1$ such that
$$l(t)|t|^{\alpha-2}\rightarrow\infty \quad as\ |t|\rightarrow\infty$$
where $l(t):=\inf\limits_{|u|=1,u\in\mathbb{R}^N}(L(t)u,u)$ is the smallest eigenvalue of $L(t)$;\\
(L2) There exist constants $\bar a>0$ and  $\bar r>0$ such that

(i) $L\in C^1(\mathbb{R},\mathbb{R}^{N\times N})$ and $|L^{'}(t)u|\leq\bar a|L(t)u|$,\quad $\forall\ |t|>\bar r$ and $u\in\mathbb{R}^N$ , or

(ii) $L\in C^2(\mathbb{R},\mathbb{R}^{N\times N})$ and $\bigl((L^{''}(t)-\bar aL(t))u,u\bigr)\leq0$,\quad $\forall\ |t|>\bar r$ and $u\in\mathbb{R}^N$ ,\\
where $L^{'}(t)=(d/dt)L(t)$ and $L^{''}(t)=(d^2/dt^2)L(t)$;\\
(W) $W(t,u)=a(t)|u|^\nu$ where $a:\mathbb{R}\rightarrow\mathbb{R}^+$ is a continuous function such that $a\in L^\mu(\mathbb{R},\mathbb{R})$,
$1<\nu<2$ is a constant, $2\leq\mu\leq\bar\nu$ and
$$\bar\nu=\left\{\begin{array}{ll}
\displaystyle\frac{2}{3-2\nu},\qquad 1<\nu<\frac32\\
\displaystyle\infty,\qquad\qquad \frac32\leq\nu<2
\end{array}\right.$$
Then (HS) possesses infinitely many homoclinic solutions.
\end{thm}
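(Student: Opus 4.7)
The strategy is to recast (HS) as a variational problem on an appropriate Hilbert space $E$ and then apply the variant fountain theorem of \cite{Z} to the $C^1$, even functional
$$I(u)=\tfrac{1}{2}\int_{\mathbb{R}}\bigl(|\dot u|^2+(L(t)u,u)\bigr)\,dt-\int_{\mathbb{R}}a(t)|u|^\nu\,dt.$$

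\textbf{Step 1: Hilbert-space setting and compact embedding.} First note that since $\alpha<1$, condition (L1) forces $l(t)\to+\infty$ as $|t|\to\infty$, so after an appropriate constant shift the quadratic form $q(u)=\int_{\mathbb{R}}(|\dot u|^2+(L(t)u,u))\,dt$ is essentially coercive and defines a Hilbert space $E\hookrightarrow H^1(\mathbb{R},\mathbb{R}^N)$ of functions decaying at infinity. A standard Fréchet--Kolmogorov argument driven by $l(t)\to\infty$ then yields the compact embedding $E\hookrightarrow L^s(\mathbb{R},\mathbb{R}^N)$ for each exponent $s$ in a range that, by construction of $\bar\nu$, contains $\nu\mu/(\mu-1)$. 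Condition (L2), in either form (i) or (ii), is a Kato-type bound on $L'$ or $L''$ which supplies the bootstrap regularity needed so that weak critical points of $I$ are genuine classical homoclinic orbits.

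\textbf{Step 2: Spectral decomposition.} By Step~1 the self-adjoint realisation $\mathcal{A}=-d^2/dt^2+L(t)$ has compact resolvent, so its spectrum is a discrete sequence $\lambda_1\le\lambda_2\le\cdots\to+\infty$ containing only finitely many non-positive eigenvalues. Let $E=E^-\oplus E^0\oplus E^+$ be the associated spectral splitting, with $\dim(E^-\oplus E^0)<\infty$, and equip $E$ with the inner product induced by $|\mathcal{A}|$. Then $I(u)=\tfrac12\|u^+\|^2-\tfrac12\|u^-\|^2-\int a(t)|u|^\nu\,dt$ is $C^1$, even, and $\mathbb{Z}_2$-invariant.

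\textbf{Step 3: Geometry of the fountain theorem.} Write $\Phi_\lambda(u)=A(u)-\lambda B(u)$ with $A(u)=\tfrac12\|u^+\|^2-\tfrac12\|u^-\|^2$ and $B(u)=\int_{\mathbb{R}}a(t)|u|^\nu\,dt\ge 0$. Fix an orthonormal basis $\{e_j\}$ of $E$ compatible with the splitting and set $Y_k=\mathrm{span}\{e_1,\dots,e_k\}$, $Z_k=\overline{\mathrm{span}\{e_j:j\ge k\}}$. Hölder with exponents $\mu,\mu/(\mu-1)$ and the embedding of Step~1 give
$$0\le B(u)\le\|a\|_{L^\mu}\,\|u\|_{L^{\nu\mu/(\mu-1)}}^{\nu}\le C_k\|u\|^\nu,$$
where $C_k\to 0$ as $k\to\infty$ because any unit sequence in $Z_k$ converges weakly, hence strongly in $L^{\nu\mu/(\mu-1)}$, to zero. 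This produces radii $\rho_k\downarrow 0$ along which $\inf_{u\in Z_k,\,\|u\|=\rho_k}\Phi_\lambda(u)\to 0$ uniformly in $\lambda\in[1,2]$. Conversely, since $\nu<2$ and all norms are equivalent on the finite-dimensional $Y_k$, the term $B$ dominates $A$ near the origin on $Y_k$, giving $r_k>\rho_k$ with $\max_{u\in Y_k,\,\|u\|=r_k}\Phi_\lambda(u)<0$. These are exactly the geometric hypotheses of Zou's variant fountain theorem.

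\textbf{Step 4: Extraction of infinitely many solutions.} The theorem then supplies, for almost every $\lambda\in[1,2]$, a critical point of $\Phi_\lambda$ at a level in $[a_k(\lambda),b_k(\lambda)]$. Letting $\lambda\to 1$ along a suitable sequence yields a bounded Cerami sequence for $I$ at a negative level $c_k$ with $c_k\to 0^-$; boundedness uses the subquadratic growth $\nu<2$. The compact embedding of Step~1 then upgrades weak convergence to strong convergence in $L^{\nu\mu/(\mu-1)}$, hence in $E$, producing a critical point $u_k$ of $I$ with $I(u_k)=c_k<0$. Since the critical levels $c_k$ are pairwise distinct and accumulate at $0$, the family $\{u_k\}$ consists of infinitely many distinct nontrivial homoclinic solutions of (HS).

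\textbf{Main obstacle.} The technically hardest step is Step~1: producing the Hilbert space $E$ together with the compact embedding into $L^{\nu\mu/(\mu-1)}$ when $L(t)$ is merely required to satisfy the weak coercivity in (L1), with the slow growth rate $l(t)\gtrsim|t|^{2-\alpha}$, and no sign assumption. This is precisely where hypothesis (L2) must be exploited to control the negative part of $L$ through an integration-by-parts/Kato-type bound, so that $\mathcal{A}$ still has compact resolvent. After Step~1 is established, the remaining steps amount to routine verification of Zou's abstract hypotheses, together with careful bookkeeping of the Hölder exponent relations enforced by $\mu\in[2,\bar\nu]$.
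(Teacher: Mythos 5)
Your plan follows the same route as the paper: Ding's variational setting for a non-definite $L$, the compact embedding and compact resolvent from (L1), the spectral splitting $E=E^-\oplus E^0\oplus E^+$, and Zou's variant fountain theorem with the ``dual'' geometry $\rho_k\downarrow 0$, $a_k\ge 0>b_k$. One concrete choice you make differently is likely to cause trouble: you set $A(u)=\frac12\|u^+\|^2-\frac12\|u^-\|^2$ and $B(u)=\int_{\mathbb R}a(t)|u|^\nu\,dt$, whereas the paper (and every application of Zou's Theorem 2.2 to an indefinite quadratic part) takes $A(u)=\frac12\|u^+\|^2$ and $B(u)=\frac12\|u^-\|^2+\int_{\mathbb R}W(t,u)\,dt$, so that both $A$ and $B$ are nonnegative. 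This placement is not cosmetic: inside the abstract theorem, bounded minimax sequences are extracted from the monotonicity of $\lambda\mapsto c_k(\lambda)$ by obtaining separate bounds $A(u_n)\le C$ and $B(u_n)\le C$; with the paper's choice these control $\|u_n^+\|$ and $\|u_n^-\|$ respectively (and $\|u_n^0\|$ via coercivity of $\int a|\cdot|^\nu$ on the finite-dimensional $E^0$), while with your indefinite $A$ the bound $A(u_n)\le C$ controls nothing. You should move $\frac12\|u^-\|^2$ into $B$; condition (F2) still holds because $\int_{\mathbb R}a(t)|u|^\nu\,dt\ge\varepsilon^2\|u\|^\nu$ on any finite-dimensional subspace --- your norm-equivalence observation (valid since $a>0$ everywhere makes $(\int a|u|^\nu\,dt)^{1/\nu}$ a genuine norm) is a clean substitute for the measure estimate (3.1) that the paper borrows from Sun--Chen--Nieto.

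Two steps you label routine also carry the real content of the proof. First, boundedness of the approximating sequence $\{u_{\lambda_n}\}$ does not follow from $\nu<2$ alone: the identity $\frac12\Phi'_{\lambda_n}(u_{\lambda_n})u_{\lambda_n}-\Phi_{\lambda_n}(u_{\lambda_n})=\lambda_n(1-\frac{\nu}{2})\int_{\mathbb R}a(t)|u_{\lambda_n}|^\nu\,dt$ only bounds the weighted $L^\nu$ quantity, and one must use the finite-dimensional lower bound on $E^-\oplus E^0$ to convert this into a bound on $\|u_{\lambda_n}^-+u_{\lambda_n}^0\|$, separately from the bound $\|u_{\lambda_n}^+\|^2\lesssim\|u_{\lambda_n}\|^\nu$ obtained by testing with $u_{\lambda_n}^+$; the indefiniteness, not the subquadraticity, is the difficulty here (this is (3.12)--(3.14) in the paper). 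Second, the upper bound $\mu\le 2/(3-2\nu)$ for $\nu<\frac32$ plays no role in the fountain geometry; it is used precisely to show that a critical point $u$ of $\Phi$ satisfies $\mathcal{A}u=W_u(t,u)\in L^2$: the H\"older step with exponents $\mu/2$ and $\bar\mu=\mu/(\mu-2)$ requires $2\bar\mu(\nu-1)\ge 1$, which is equivalent to $\mu\le 2/(3-2\nu)$. Only then does $u\in\mathscr{D}(\mathcal{A})\hookrightarrow W^{2,2}$, and (L2) via Lemma 2.2 yields $u(t)\to0$ and $\dot u(t)\to0$. Your plan attributes the decay to (L2) generically but never locates where the hypothesis $\mu\in[2,\bar\nu]$ actually bites; without that step the critical points produced are only weak solutions, not homoclinics.
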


\begin{rmk} When we choose $\nu\in(1,\frac32)$, it is easy to see that $W$ satisfies the condition (W) of Theorem 1.1 but does not satisfy the corresponding conditions in
\cite{D,ZL}. Furthermore, the constant $\mu$ can be change in $[2,\bar\nu]$.
\end{rmk}

\section{Preliminaries}
In this section, for the
purpose of readability and making this paper self-contained, we will show the variational setting for (HS), which can be found in \cite{D,ZL}. In what follows, we will always assume that $L(t)$ satisfies (L1). Let $\mathcal{A}$ be the selfadjoint extension of the operator $-(d^2/dt^2)+L(t)$ with domain
 $\mathscr{D}(\mathcal{A})\subset L^2\equiv L^2(\mathbb{R},\mathbb{R}^N)$. Let us write $\{E(\lambda): -\infty<\lambda<+\infty\}$ and $|\mathcal{A}|$ for the spectral  resolution and the absolute value of $\mathcal{A}$ respectively, and denote by $|\mathcal{A}|^{1/2}$ the square root of $|\mathcal{A}|$. Define $U=I-E(0)-E(-0)$. Then $U$ commutes with
 $\mathcal{A}$, $|\mathcal{A}|$ and $|\mathcal{A}|^{1/2}$, and $\mathcal{A}=U|\mathcal{A}|$ is the polar decomposition of $\mathcal{A}$ (see \cite{K}). We write $E=\mathscr{D}(|\mathcal{A}|^{1/2})$ and introduce the following inner product on $E$
$$(u,v)_0=(|\mathcal{A}|^{1/2}u,|\mathcal{A}|^{1/2}v)_2+(u,v)_2$$
and norm
$$\|u\|_0=(u,u)_0^{1/2}.$$
Here, $(\cdot,\cdot)_2$ denotes the usual $L^2$-inner product. Therefore, $E$ is a Hilbert space. Since $C_0^\infty(\mathbb{R},\mathbb{R}^N)$ is dense
in $E$, it is obvious that $E$ is continuous embedded in $H^1(\mathbb{R},\mathbb{R}^N)$ (see \cite{D}). Furthermore, we have the following lemmas by \cite{D}.

\begin{lem}
If $L$ satisfies (L1), then $E$ is compactly embedded in $L^p\equiv L^p(\mathbb{R},\mathbb{R}^N)$ for all $1\leq p\in(2/(3-\alpha),\infty]$.
\end{lem}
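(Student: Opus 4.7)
The plan is to follow the standard ``localization plus tail decay'' argument for Rellich-type embeddings with a confining potential at infinity. I would first set up two basic ingredients: (i) the continuous inclusion $E\hookrightarrow H^1(\mathbb{R},\mathbb{R}^N)$ recorded just before the lemma, which by Sobolev gives $\|u\|_{L^\infty}\leq C\|u\|_0$; and (ii) a weighted $L^2$ bound
\[
\int_{|t|>R_0} l(t)\,|u(t)|^2\,dt \leq C\|u\|_0^2, \qquad u\in E,
\]
valid for some $R_0$ large enough that $l(t)>0$ on $|t|>R_0$ (which exists by (L1)). For (ii), since (L1) forces $l(t)\to\infty$ and $l$ is continuous, $l$ is bounded below, so one can pick $M\geq 0$ with $L(t)+MI\geq I$ globally; the positive operator $\mathcal{A}+MI$ then has quadratic form $\int|\dot u|^2+((L+MI)u,u)\,dt$, and the spectral theorem gives that the norm induced by $(\mathcal{A}+MI)^{1/2}$ is equivalent to $\|\cdot\|_0$ (the negative spectrum of $\mathcal{A}$ is bounded, so $|\lambda|+1\sim\lambda+M+1$ on $\sigma(\mathcal{A})$). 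This comparison yields (ii).

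With these in hand, take a bounded sequence $(u_n)\subset E$; after passing to a subsequence, $u_n\rightharpoonup 0$ in $E$, and it suffices to show $u_n\to 0$ in $L^p$. For any $R>R_0$, split $\|u_n\|_{L^p}^p=\int_{|t|\leq R}|u_n|^p\,dt+\int_{|t|>R}|u_n|^p\,dt$. The first integral tends to $0$ for each fixed $R$ by the classical compact embedding $H^1((-R,R))\hookrightarrow C([-R,R])$. For the tail in the range $p\geq 2$ I would estimate
\[
\int_{|t|>R}|u_n|^p\,dt \leq \|u_n\|_\infty^{p-2}\int_{|t|>R}|u_n|^2\,dt \leq \frac{\|u_n\|_\infty^{p-2}}{\inf_{|t|>R}l(t)}\int_{|t|>R}l(t)|u_n|^2\,dt,
\]
which by (i), (ii), and (L1) is $o(1)$ uniformly in $n$ as $R\to\infty$. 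The endpoint $p=\infty$ follows from Arzel\`a--Ascoli on compacta (using $H^1\hookrightarrow C^{1/2}$) combined with the same tail decay.

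The sharp threshold $p>2/(3-\alpha)$ enters in the remaining case $1\leq p<2$, where H\"older with exponents $2/p$ and $2/(2-p)$ gives
\[
\int_{|t|>R}|u_n|^p\,dt \leq \Bigl(\int_{|t|>R}l(t)|u_n|^2\,dt\Bigr)^{p/2}\Bigl(\int_{|t|>R}l(t)^{-p/(2-p)}\,dt\Bigr)^{(2-p)/2}.
\]
Since (L1) yields $l(t)\geq c|t|^{2-\alpha}$ for $|t|$ large, the second factor is finite (and $o(1)$ as $R\to\infty$) precisely when $(2-\alpha)p/(2-p)>1$, i.e.\ $p>2/(3-\alpha)$; this is exactly the range in the statement. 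The main obstacle is really just the clean setup of (ii): because $\mathcal{A}$ is only assumed selfadjoint (not positive) and the norm on $E$ is defined through $|\mathcal{A}|^{1/2}$ rather than $\mathcal{A}$ itself, one has to compare $|\mathcal{A}|$ with the positive shift $\mathcal{A}+MI$ via the spectral theorem. Once (ii) is in hand, the remaining local-Rellich and H\"older/interpolation steps go through as above.
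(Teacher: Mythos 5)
Your argument is essentially correct, but note that the paper itself offers no proof of this lemma: it is quoted from Ding's paper [6] (``we have the following lemmas by [6]''), and the proof you give is, in substance, the standard one found there --- a weighted coercivity estimate at infinity, local Rellich compactness, and H\"older for the small exponents. Two points deserve a closer look. First, your step (ii) is the only place where genuine operator-theoretic care is needed, and your treatment is sound: since (L1) forces $l(t)\to+\infty$ and $l$ is continuous, one has $L+MI\geq I$ for some $M$, hence $\mathcal{A}\geq(1-M)I$ is bounded below, and comparing the spectral functions $\lambda+M$ and $|\lambda|+1$ on $\sigma(\mathcal{A})\subset[1-M,\infty)$ gives equivalence of the corresponding form norms; together with (2.2) this yields $\int_{\mathbb{R}}\bigl(((L+MI)u,u)\bigr)\,dt\leq C\|u\|_0^2$ and the weighted tail bound follows. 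Second, the endpoint $p=\infty$ is stated too quickly: smallness of the $L^2$ tail does not by itself control $\sup_{|t|>R}|u_n(t)|$; you need the elementary inequality $|u(t)|^2\leq 2\bigl(\int_{|s|\geq|t|}|u|^2\,ds\bigr)^{1/2}\bigl(\int_{\mathbb{R}}|\dot u|^2\,ds\bigr)^{1/2}$ to convert the weighted $L^2$ decay into uniform pointwise decay at infinity; once inserted, the endpoint goes through. Finally, a cosmetic remark: under the paper's hypothesis $\alpha<1$ one has $2/(3-\alpha)<1$, so the threshold produced by your H\"older step is automatically met for every $p\geq1$; your computation nonetheless correctly identifies $p>2/(3-\alpha)$ as exactly where (L1) enters, in agreement with the statement.
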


\begin{lem}
Let $L$ satisfies (L1) and (L2), then $\mathscr{D}(\mathcal{A})$ is continuously embedded in $W^{2,2}(\mathbb{R},\mathbb{R}^N)$, and consequently, we have
$$|u(t)|\rightarrow0 \quad and\quad|\dot{u}(t)|\rightarrow0\quad as\ |t|\rightarrow\infty,\quad \forall\ u\in\mathscr{D}(\mathcal{A}).$$
\end{lem}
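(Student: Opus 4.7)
My plan is to prove the continuous inclusion $\mathscr{D}(\mathcal{A})\hookrightarrow W^{2,2}(\mathbb{R},\mathbb{R}^N)$ by establishing the a priori estimate
$$\|\ddot u\|_2+\|L(t)u\|_2\leq C\bigl(\|\mathcal{A}u\|_2+\|u\|_2\bigr)\qquad(\ast)$$
for all $u\in\mathscr{D}(\mathcal{A})$. Once $(\ast)$ holds, the pointwise decay is automatic: $u\in W^{2,2}(\mathbb{R},\mathbb{R}^N)$ gives $u,\dot u\in W^{1,2}(\mathbb{R},\mathbb{R}^N)$, and elements of $W^{1,2}(\mathbb{R})$ are continuous and vanish at infinity. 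I would verify $(\ast)$ first for $u\in C_0^\infty(\mathbb{R},\mathbb{R}^N)$, which is a core for $\mathcal{A}$ (since (L1) forces $l(t)\to\infty$, $L(t)$ is bounded below, and essential self-adjointness of the Schr\"odinger-type operator on $C_0^\infty$ is standard), and then extend by density in the graph norm.

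For $u\in C_0^\infty$, expand
$$\|\mathcal{A}u\|_2^2=\|\ddot u\|_2^2+\|L(t)u\|_2^2-2\int_{\mathbb{R}}(\ddot u,L(t)u)\,dt,$$
and integrate by parts once to obtain $-2\int(\ddot u,Lu)\,dt=2\int(L\dot u,\dot u)\,dt+2\int(L'u,\dot u)\,dt$. Under (L2)(i) the last term is handled directly: the pointwise bound $|L'u|\leq\bar a|Lu|$ on $|t|>\bar r$, the continuity of $L'$ on $[-\bar r,\bar r]$, and Young's inequality yield $\bigl|2\int(L'u,\dot u)\,dt\bigr|\leq\epsilon\|Lu\|_2^2+C_\epsilon(\|\dot u\|_2^2+\|u\|_2^2)$. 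Under (L2)(ii), a second integration by parts rewrites $2\int(L'u,\dot u)\,dt=-\int(L''u,u)\,dt$, and the sign hypothesis $(L''u,u)\leq\bar a(Lu,u)$ on $|t|>\bar r$, together with the Cauchy--Schwarz estimate $\int(Lu,u)\,dt\leq\epsilon\|Lu\|_2^2+C_\epsilon\|u\|_2^2$, gives the same type of bound.

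The absorption of the remaining cross term exploits (L1): since $l(t)\to\infty$, there exists $R>\bar r$ with $L(t)\geq 0$ on $|t|>R$, so $-2\int_{|t|>R}(L\dot u,\dot u)\,dt\leq 0$, while on the compact interval $[-R,R]$ the continuous $L$ is bounded and yields $\bigl|2\int_{|t|\leq R}(L\dot u,\dot u)\,dt\bigr|\leq M\|\dot u\|_2^2$. The interpolation $\|\dot u\|_2^2\leq\|u\|_2\|\ddot u\|_2\leq\epsilon\|\ddot u\|_2^2+C_\epsilon\|u\|_2^2$ then converts this into an absorbable error. Collecting all bounds, choosing $\epsilon$ small, and moving $\epsilon\|\ddot u\|_2^2+\epsilon\|Lu\|_2^2$ to the left-hand side of the identity for $\|\mathcal{A}u\|_2^2$ produces $(\ast)$. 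The main obstacle is precisely that $L$ is allowed to be indefinite on a bounded set, so the proof must carefully split $\mathbb{R}$ into an outer region, where (L1) supplies positivity and (L2) controls derivatives of $L$, and a compact region where only crude $\|u\|_2$- and $\|\dot u\|_2$-type estimates are available; balancing the Young's inequalities so that nothing of size comparable to $\|Lu\|_2^2$ or $\|\ddot u\|_2^2$ remains on the right is the delicate point.
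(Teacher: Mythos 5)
The paper does not actually prove this lemma --- it is quoted from Ding \cite{D} without argument --- so there is no in-paper proof to compare against; your sketch is essentially the standard argument (and in substance the one in \cite{D}). The plan is correct: the identity $\|\mathcal{A}u\|_2^2=\|\ddot u\|_2^2+\|Lu\|_2^2+2\int(L\dot u,\dot u)\,dt+2\int(L'u,\dot u)\,dt$ on $C_0^\infty$ is right, (L2)(i) or (ii) controls the $L'$ term up to $\epsilon\|Lu\|_2^2$ plus lower-order terms, (L1) gives $L\geq 0$ outside a compact set so the $(L\dot u,\dot u)$ term is harmless after the interpolation $\|\dot u\|_2^2\leq\|u\|_2\|\ddot u\|_2$, and the core/density extension from $C_0^\infty$ to $\mathscr{D}(\mathcal{A})$ is legitimate since (L1) makes $L$ bounded below and the paper's definition of $\mathcal{A}$ already presumes essential self-adjointness on $C_0^\infty$. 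The only point I would ask you to write out in full is that extension step (Cauchy-ness of $u_n$ and of $Lu_n$ in $L^2$ under the graph norm, and identification of the limits), which you currently compress into one clause.
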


\par From \cite{D}, combining (L1) and Lemma 2.1, we can prove that $\mathcal{A}$ possesses a compact resolvent. Consequently,
the spectrum $\sigma(\mathcal{A})$ consists of eigenvalues, which can be arranged as $\lambda_1\leq\lambda_2\leq\cdots\rightarrow\infty$ (counted with multiplicity), and the corresponding system of eigenfunctions $\{e_n: n\in\mathbb{N}\}$, $\mathcal{A}e_n=\lambda_ne_n$, which forms an orthogonal basis in $L^2$. Next, we define
$$n^-=\#\{i|\lambda_i<0\},\ n^0=\#\{i|\lambda_i=0\},\ \bar n=n^-+n^0$$
and
$$E^-=\mbox{span}\{e_1,\cdots,e_{n^-}\},\ E^0=\mbox{span}\{e_{n^-+1},\cdots,e_{\bar n}\}=\mbox{Ker} \mathcal{A},\ E^+=\overline{\mbox{span}\{e_{\bar n+1},\cdots\}}.$$
Here, the closure is taken in $E$ with respect to the norm $\|\cdot\|_0$. Then
$$E=E^-\oplus E^0\oplus E^+.$$
Furthermore, we define on $E$ the following inner product
$$(u,v)=(|\mathcal{A}|^{1/2}u,|\mathcal{A}|^{1/2}v)_2+(u^0,v^0)_2,$$
and norm $$\|u\|^2=(u,u)=\||\mathcal{A}|^{1/2}u\|^2_2+\|u^0\|^2_2,$$
where $u=u^-+u^0+u^+$ and $v=v^-+v^0+v^+\in E^-\oplus E^0\oplus E^+$. It is clear that the norms $\|\cdot\|_0$ and $\|\cdot\|$ are equivalent by \cite{D}. From now on, we will take $(E,\|\cdot\|)$ instead of $(E,\|\cdot\|_0)$ as the working space without loss of generality.

\begin{rmk} We note that the decomposition $E=E^-\oplus E^0\oplus E^+$ is also orthogonal with respect to inner products $(\cdot,\cdot)$ and $(\cdot,\cdot)_2$.
Moreover, we will denote by $E=E^-\oplus E^0\oplus E^+$ the orthogonal decomposition with respect to the inner products $(\cdot,\cdot)$ unless otherwise stated.
\end{rmk}

\begin{rmk} Since the norms $\|\cdot\|_0$ and $\|\cdot\|$ are equivalent, by Lemma 2.1, for any $1\leq p\in(2/(3-\alpha),\infty]$, there exists a constant
$\beta_p>0$ such that $$\|u\|_p\leq\beta_p\|u\|,\quad \forall\ u\in E,\eqno(2.1)$$
where $\|u\|_p$ denotes the usual norm of $L^p$ and $\beta_p$ is independent of $u$.
\end{rmk}

Let $$\mathcal{O}(u,v)=(|\mathcal{A}|^{1/2}Uu,|\mathcal{A}|^{1/2}v),\quad \forall\ u,v\in E$$
be the quadratic form associated with $\mathcal{A}$, where $U$ is the
polar decomposition of $\mathcal{A}$. Given any $u\in\mathscr{D}(\mathcal{A})$ and $v\in E$, we can get
$$\mathcal{O}(u,v)=\int_\mathbb{R}\left((\dot{u},\dot{v})+(L(t)u,v)\right)dt.\eqno(2.2)$$
Note that $\mathscr{D}(\mathcal{A})$ is dense in $E$, we have (2.2) holds for all $u,v\in E$. Furthermore, by definition, it follows that
$$\mathcal{O}(u,v)=\left((P^+-P^-)u,v\right)=\|u^+\|^2-\|u^-\|^2\eqno(2.3)$$
for all $u=u^-+u^0+u^+\in E$, where $P^{\pm}:E\rightarrow E^{\pm}$ are the respective orthogonal projections.

\par Combining (2.2) and (2.3), we define the functional $\Phi$ on $E$ by
$$\begin{array}{ll}
\displaystyle\Phi(u)\!\!\!\!&=\displaystyle\frac12\int_\mathbb{R}\left(\|\dot{u}\|^2+(L(t)u,u)\right)dt-\int_\mathbb{R}W(t,u)dt\\
&=\displaystyle\frac12\|u^+\|^2-\frac12\|u^-\|^2-\int_\mathbb{R}W(t,u)dt\\
&=\displaystyle\frac12\|u^+\|^2-\frac12\|u^-\|^2-\Psi(u),
\end{array}\eqno(2.4)$$
where $\Psi(u)=\int_\mathbb{R}W(t,u)dt=\int_\mathbb{R}a(t)|u|^\nu dt$ for all $u=u^-+u^0+u^+\in E=E^-\oplus E^0\oplus E^+$.

\begin{rmk}
From (W) with Lemma 2.1, we can easily see that $\Phi$ and $\Psi$ are well defined. We will consider two cases as follows.\\
{\bf Case (i)} If $2\leq\mu<\infty$, then
$$\begin{array}{ll}
\displaystyle\left|\Psi(u)\right|\!\!\!\!&=\displaystyle\left|\int_\mathbb{R}W(t,u)dt\right|=\left|\int_\mathbb{R}a(t)|u|^\nu dt\right|\\
&\leq\displaystyle\left(\int_\mathbb{R}|a(t)|^\mu dt\right)^{\frac1\mu}\left(\int_\mathbb{R}|u|^{\nu\mu^*}dt\right)^{\frac{1}{\mu^*}}\\
&=\displaystyle\|a\|_\mu\|u\|_{\nu\mu^*}^\nu<\infty
\end{array}$$
where $\frac1\mu+\frac{1}{\mu^*}=1$, $\nu\mu^*\geq1$.\\
{\bf Case (ii)} If $\mu=\infty$, then $|\Psi(u)|\leq\|a\|_\infty\|u\|_\nu^\nu<\infty$.
\end{rmk}

\begin{lem}
Let (L1), (L2) and (W) hold. Then $\Psi\in C^1(E,\mathbb{R})$ and $\Psi': E\rightarrow E^*$ is compact, and consequently $\Phi\in C^1(E,\mathbb{R})$.
Moreover,
$$\Psi'(u)v=\int_\mathbb{R}\left(W_u(t,u),u\right)dt=\int_\mathbb{R}\left(\nu a(t)|u|^{\nu-2}u,v\right)dt\eqno(2.5)$$
$$\begin{array}{ll}
\displaystyle\Phi'(u)v\!\!\!\!&=\displaystyle(u^+,v^+)-(u^-,v^-)-\Psi'(u)v\\
&=\displaystyle(u^+,v^+)-(u^-,v^-)-\int_\mathbb{R}\left(W_u(t,u),v\right)dt
\end{array}\eqno(2.6)$$
for all $u=u^-+u^0+u^+$ and $v=v^-+v^0+v^+\in E^-\oplus E^0\oplus E^+$. Moreover, all critical points of $\Phi$ on $E$ are homoclinic solutions of (HS) satisfying $u(t)\to 0$ and $\dot u(t)\to 0$ as $|t|\to\infty$.
\end{lem}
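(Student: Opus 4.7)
The plan is to verify the four assertions of the lemma in the natural order: Gateaux differentiability of $\Psi$ with the stated formula, continuity of the derivative (which upgrades Gateaux to $C^1$), compactness of $\Psi'$, and finally the identification of critical points with homoclinic solutions. The decomposition (2.6) for $\Phi'$ then follows immediately from (2.4) and the orthogonality of $E = E^- \oplus E^0 \oplus E^+$ with respect to $(\cdot,\cdot)$.

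For the differentiability, since $W(t,u) = a(t)|u|^\nu$ with $1 < \nu < 2$, the integrand is $C^1$ in $u$ with $W_u(t,u) = \nu a(t)|u|^{\nu-2}u$ (set to $0$ at $u=0$; the assumption $\nu > 1$ is what makes this continuous). The Gateaux derivative formula (2.5) will follow from the mean value theorem applied pointwise and dominated convergence. I would split into the two cases of Remark 2.4: if $2 \le \mu < \infty$, use H\"older with exponents $(\mu,\mu^*)$ to bound the difference quotient by $\nu\|a\|_\mu\|u+\theta v\|_{\nu\mu^*}^{\nu-1}\|v\|_{\nu\mu^*}$, and invoke (2.1) after verifying that $\nu\mu^*$ lies in the admissible range from Lemma 2.1 (i.e., $\nu\mu^* > 2/(3-\alpha)$, which one checks using $\mu \le \bar\nu$ and $\alpha < 1$); if $\mu = \infty$, use $\|a\|_\infty$ directly and bound by $\|v\|_\nu$, where again $\nu \ge 3/2 > 2/(3-\alpha)$.

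For continuity and compactness of $\Psi'$, the key input is that by Lemma 2.1 the embedding $E \hookrightarrow L^{\nu\mu^*}$ (resp.\ $E \hookrightarrow L^\nu$) is compact. If $u_n \rightharpoonup u$ in $E$, then $u_n \to u$ strongly in $L^{\nu\mu^*}$; applying H\"older,
\begin{equation*}
\|\Psi'(u_n)-\Psi'(u)\|_{E^*} \le \nu\|a\|_\mu \, \bigl\| |u_n|^{\nu-2}u_n - |u|^{\nu-2}u \bigr\|_{(\nu\mu^*/(\nu-1))} \cdot \beta_{\nu\mu^*},
\end{equation*}
so it suffices to show the $L^{\nu\mu^*/(\nu-1)}$-convergence of the nonlinear expression. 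This follows from pointwise convergence along a subsequence together with a dominated convergence / Vitali argument, with domination provided by (a convergent subsequence of) $|u_n|^{\nu-1}$ in $L^{\nu\mu^*/(\nu-1)}$. The main obstacle I anticipate is this step: handling the singular nonlinearity $|u|^{\nu-2}u$ (since $\nu-2<0$) carefully to produce a uniform majorant; the saving fact is that $|\,|u_n|^{\nu-2}u_n - |u|^{\nu-2}u\,| \le C(|u_n|^{\nu-1}+|u|^{\nu-1})$, which is bounded in $L^{\nu\mu^*/(\nu-1)}$ by strong convergence of $u_n$ in $L^{\nu\mu^*}$. The case $\mu = \infty$ is analogous but uses the compact embedding into $L^\nu$.

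Finally, for the identification with homoclinic solutions, let $u \in E$ satisfy $\Phi'(u) = 0$, i.e., by (2.6),
\begin{equation*}
\int_\mathbb{R}\bigl((\dot u,\dot v) + (L(t)u,v)\bigr)\,dt = \int_\mathbb{R}\bigl(W_u(t,u),v\bigr)\,dt \qquad \forall\, v \in E.
\end{equation*}
Testing against $v \in C_0^\infty(\mathbb{R},\mathbb{R}^N)$ shows that $\ddot u - L(t)u + W_u(t,u) = 0$ in the distributional sense. Because $W_u(t,u) \in L^2_{\mathrm{loc}}$ (using (W) and $u \in L^\infty_{\mathrm{loc}}$ from $E \hookrightarrow H^1 \hookrightarrow L^\infty_{\mathrm{loc}}$), a bootstrap using the quadratic form identity shows $u \in \mathscr{D}(\mathcal{A})$; then Lemma 2.2 gives $u \in W^{2,2}(\mathbb{R},\mathbb{R}^N)$ with $|u(t)|,|\dot u(t)| \to 0$ as $|t| \to \infty$, and standard regularity from $W \in C^1$ upgrades $u$ to $C^2$. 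This is the other delicate point, as the subquadratic nature of $W$ requires one to be careful when asserting $W_u(t,u) \in L^2$, but $u \in L^\infty$ (from the embedding with $p=\infty$ in Lemma 2.1) together with $a \in L^\mu \subset L^2$ settles it.
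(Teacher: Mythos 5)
Your treatment of the differentiability and compactness of $\Psi'$ follows essentially the paper's route (mean value theorem, H\"older, the compact embedding of Lemma 2.1, and dominated convergence along a pointwise-dominated subsequence); the only real difference is that you separate the weight $a$ from the nonlinear factor $|u_n|^{\nu-2}u_n-|u_0|^{\nu-2}u_0$ by a three-factor H\"older inequality, whereas the paper keeps $W_u(t,u_n)-W_u(t,u_0)$ together in $L^\mu$ and pairs it with $v\in L^{\mu^*}$ (and, for $\mu=\infty$, works in $L^2$ with a summable subsequence producing the majorant $g$). Both exponent bookkeepings check out, and your remark that domination requires passing to a subsequence is exactly the device the paper uses.

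The last step, however, contains a genuine error. To conclude $u\in\mathscr{D}(\mathcal{A})$ you must show $W_u(\cdot,u)\in L^2(\mathbb{R},\mathbb{R}^N)$, and you justify this by ``$u\in L^\infty$ together with $a\in L^\mu\subset L^2$''. On the infinite-measure space $\mathbb{R}$ the inclusion $L^\mu\subset L^2$ is false for $\mu>2$ (take $a(t)=(1+|t|)^{-1/2}$, which lies in $L^\mu$ for every $\mu>2$ but not in $L^2$), so the bound $\int_\mathbb{R}|W_u(t,u)|^2dt\leq\nu^2\|u\|_\infty^{2(\nu-1)}\|a\|_2^2$ only covers the case $\mu=2$. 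For $2<\mu\leq\bar\nu$ the paper instead estimates $\int_\mathbb{R}|a|^2|u|^{2(\nu-1)}dt\leq\|a\|_\mu^2\|u\|_{2\bar\mu(\nu-1)}^{2(\nu-1)}$ with $\frac2\mu+\frac{1}{\bar\mu}=1$, and the admissibility of the exponent, $2\bar\mu(\nu-1)\geq1$, is precisely equivalent to $\mu\leq 2/(3-2\nu)$. This is the one place in the lemma where the upper bound $\mu\leq\bar\nu$ in hypothesis (W) is actually needed, so the gap is not cosmetic: without this H\"older pairing, the identification of critical points with homoclinic solutions (i.e.\ the decay of $u$ and $\dot u$ via Lemma 2.2) is not established for $\mu>2$.
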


\begin{proof}We first show that (2.5) holds by definition. If $2\leq\mu<\infty$, then $1<\mu^*\leq2$, where $\frac1\mu+\frac{1}{\mu^*}=1$. For any given $u, v\in E$, by the Mean Value Theorem and the H\"older inequality, we have
$$\begin{array}{ll}
\displaystyle&\quad\displaystyle\left|\int_\mathbb{R}\left[W(t,u+v)-W(t,u)-\bigl(W_u(t,u),v\bigr)\right]dt\right|\\
&=\displaystyle\left|\int_\mathbb{R}\left[\int_0^1\bigl(W_u(t,u+\theta v)-W_u(t,u),v\bigr)d\theta\right]dt\right|\\
&\leq\displaystyle2\nu\int_\mathbb{R}|a(t)|(|u|+|v|)^{\nu-1}|v|dt\\
&\leq\displaystyle2\nu\int_\mathbb{R}|a(t)|(|u|^{\nu-1}+|v|^{\nu-1})|v|dt\\
&\leq\displaystyle2\nu\left(\int_\mathbb{R}|a(t)|^\mu dt\right)^{\frac1\mu}\left(\int_\mathbb{R}|u|^{\mu^*(\nu-1)}|v|^{\mu^*}dt\right)^{\frac{1}{\mu^*}}\\
&+\displaystyle2\nu\left(\int_\mathbb{R}|a(t)|^\mu dt\right)^{\frac1\mu}\left(\int_\mathbb{R}|v|^{\mu^*\nu}dt\right)^{\frac{1}{\mu^*}}\\
&\leq\displaystyle2\nu\|a\|_\mu\left(\int_\mathbb{R}|u|^2dt\right)^{\frac{\nu-1}{2}}\left(\int_\mathbb{R}|v|^\frac{2\mu^*}{2+\mu^*-\mu^*\nu}dt\right)^{\frac{2+\mu^*-\mu^*\nu}{2\mu^*}}
+2\nu\|a\|_\mu\|v\|_{\mu^*\nu}^{\nu}\\
&=\displaystyle2\nu\|a\|_\mu\|u\|_2^{\nu-1}\|v\|_{\frac{2\mu^*}{2+\mu^*-\mu^*\nu}}+2\nu\|a\|_\mu\|v\|_{\mu^*\nu}^{\nu}\\
&\leq\displaystyle2\nu\beta_{\frac{2\mu^*}{2+\mu^*-\mu^*\nu}}\|a\|_\mu\|u\|_2^{\nu-1}\|v\|+2\nu\beta_{\mu^*\nu}^{\nu}\|a\|_\mu\|v\|^{\nu}\rightarrow0,\qquad \mbox{as $v\rightarrow0$ in $E$}
\end{array}\eqno(2.7)$$
where $\frac{2\mu^*}{2+\mu^*-\mu^*\nu}\geq1$ and the second inequality holds by the fact that if $0<p<1$, then $(|a|+|b|)^p\leq|a|^p+|b|^p$, $\forall\ a, b\in\mathbb{R}$. If $\mu=\infty$, then similar to the proof of (2.7), we can obtain
$$\begin{array}{ll}
\displaystyle&\quad\displaystyle\left|\int_\mathbb{R}\left[W(t,u+v)-W(t,u)-\bigl(W_u(t,u),v\bigr)\right]dt\right|\\
&\leq\displaystyle2\nu\|a\|_\infty(\|u\|_\infty^{\nu-1}+\|v\|_\infty^{\nu-1})\int_\mathbb{R}|v|dt\\
&\leq\displaystyle2\nu\|a\|_\infty\beta_\infty^{\nu-1}\beta_1(\|u\|^{\nu-1}+\|v\|^{\nu-1})\|v\|
\rightarrow0,\qquad \mbox{as $v\rightarrow0$ in $E$}
\end{array}\eqno(2.8)
$$
where the last inequality holds by (2.1) and $\beta_\infty, \beta_1$ are constants there. Combining (2.7) and (2.8), (2.5) holds immediately by the definition of Fr\'{e}chet derivatives. Consequently, (2.6) also holds due to the definition of $\Phi$.

Next, we verify that $\Psi': E\rightarrow E^*$ is compact. Let $u_n\rightharpoonup u_0$ (weakly) in $E$, by Lemma 2.1, we have $u_n\rightarrow u_0$
in $L^p$ for all $1\leq p\in(2/(3-\alpha),\infty]$. If $2\leq\mu<\infty$, using the H\"older inequality, we can obtain
$$\begin{array}{ll}
\displaystyle\|\Psi'(u_n)-\Psi'(u_0)\|_{E^*}\!\!\!\!&=\displaystyle\sup_{\|v\|=1}\|(\Psi'(u_n)-\Psi'(u_0))v\|\\
&=\displaystyle\sup_{\|v\|=1}\left|\int_\mathbb{R}\bigl(W_u(t,u_n)-W_u(t,u_0),v\bigr)dt\right|\\
&\leq\displaystyle\sup_{\|v\|=1}\left[\left(\int_\mathbb{R}|W_u(t,u_n)-W_u(t,u_0)|^\mu dt\right)^{\frac1\mu}\|v\|_{\mu^*}\right]\\
&\leq\displaystyle\beta_{\mu^*}\left(\int_\mathbb{R}|W_u(t,u_n)-W_u(t,u_0)|^\mu dt\right)^{\frac1\mu},\qquad \forall\ n\in\mathbb{N}
\end{array}\eqno(2.9)
$$
where the last inequality holds by (2.1) and $\beta_\mu^*$ is the constant there, $\frac1\mu+\frac{1}{\mu^*}=1$. Next, we will prove that
$W_u(t,u_n)\rightarrow W_u(t,u_0)$ in $L^\mu(\mathbb{R},\mathbb{R}^N)$. Observing that ${u_n}$ is bounded in $L^\infty$, then by the Jensen inequality, we have
$$\begin{array}{ll}
\displaystyle&\quad\displaystyle\int_\mathbb{R}|W_u(t,u_n)-W_u(t,u_0)|^\mu dt\\
&\leq\displaystyle2^{\mu-1}\nu^\mu\int_\mathbb{R}|a(t)|^\mu(|u_n|^\mu+|u_0|^\mu)dt\\
&\leq\displaystyle2^{\mu-1}\nu^\mu\int_\mathbb{R}|a(t)|^\mu(\|u_n\|_\infty^\mu+\|u_0\|_\infty^\mu)dt\\
&\leq\displaystyle2^{\mu-1}\nu^\mu M\int_\mathbb{R}|a(t)|^\mu dt\\
\end{array}
$$
where $M=2\max\{\|u_0\|_\infty^\mu, \|u_n\|_\infty^\mu, \forall\ n\in\mathbb{N}\}$. Combining the fact that $u_n\rightarrow u_0$ in $L^\infty$ and the Lebesgue's Dominated Convergence Theorem,
$$\left(\int_\mathbb{R}|W_u(t,u_n)-W_u(t,u_0)|^\mu dt\right)^{\frac1\mu}\rightarrow0, \qquad \mbox{as}\qquad n\rightarrow\infty.$$
Next, we will deal with the case of $\mu=\infty$ (i.e. $\nu>\frac32$), this part is mainly motivated by the proof of Lemma 2 in \cite{OW}. By the H\"older inequality, we have
$$\begin{array}{ll}
\displaystyle\|\Psi'(u_n)-\Psi'(u_0)\|_{E^*}\!\!\!\!&\leq\displaystyle
\sup_{\|v\|=1}\left[\left(\int_\mathbb{R}|W_u(t,u_n)-W_u(t,u_0)|^2dt\right)^{\frac12}\|v\|_2\right]\\
&\leq\displaystyle\beta_2\left(\int_\mathbb{R}|W_u(t,u_n)-W_u(t,u_0)|^2dt\right)^{\frac12},\qquad \forall\ n\in\mathbb{N}
\end{array}\eqno(2.10)
$$
We note that by Lemma 2.1, $u_n\rightarrow u_0$ in $L^{2(\nu-1)}$ for $\nu>\frac32$, passing to a subsequence if necessary, it can be assumed that
$$\sum_{n=1}^{\infty}\|u_n-u_0\|_{2(\nu-1)}<+\infty,$$
which implies that $$\sum_{n=1}^{\infty}|u_n(t)-u_0(t)|=g(t)\in L^{2(\nu-1)}(\mathbb{R},\mathbb{R}).$$
Since $\nu>\frac32$, then
$$\begin{array}{ll}
\displaystyle&\quad\displaystyle\int_\mathbb{R}|W_u(t,u_n)-W_u(t,u_0)|^2dt\\
&\leq\displaystyle\int_\mathbb{R}2\nu^2|a(t)|^2(|u_n|^{2(\nu-1)}+|u_0|^{2(\nu-1)})dt\\
&\leq\displaystyle\int_\mathbb{R}2\nu^2|a(t)|^2(2^{2\nu-3}|u_n-u_0|^{2(\nu-1)}+(2^{2\nu-3}+1)|u_0|^{2(\nu-1)})dt\\
&\leq\displaystyle2^{2\nu-1}\nu^2\|a\|_\infty^2\int_\mathbb{R}(|g(t)|^{2(\nu-1)}+|u_0|^{2(\nu-1)})dt\\
&\leq\displaystyle2^{2\nu-1}\nu^2\|a\|_\infty^2(\|g\|_{2(\nu-1)}^{2(\nu-1)}+\beta_{2(\nu-1)}^{2(\nu-1)}\|u_0\|^{2(\nu-1)})
\end{array}
$$
Applying the Lebesgue's Dominated Convergence Theorem, we have
$$\left(\int_\mathbb{R}|W_u(t,u_n)-W_u(t,u_0)|^2dt\right)^{\frac12}\rightarrow0, \qquad \mbox{as}\qquad n\rightarrow\infty.$$
Consequently, $\Psi'$ is weakly continuous, and so $\Psi'$ is continuous. Therefore $\Psi\in C^1(E,\mathbb{R})$ and hence $\Phi\in C^1(E,\mathbb{R})$.
Moreover, $\Psi'$ is compact due to the weak continuity of $\Psi'$ and the fact that $E$ is a Hilbert Space.

Finally, we will prove that all critical points of $\Phi$ on $E$ are homoclinic solutions of (HS). By the standard procedure, we can see that any critical points of $\Phi$ on $E$ satisfy (HS) and $u\in C^2(\mathbb{R},\mathbb{R}^N)$.  We note that if $1<\nu<\frac32$, then $2\leq\mu\leq\frac{2}{3-2\nu}$. For $\mu=2$, by (HS), we have
$$\begin{array}{ll}
\displaystyle\|\mathcal{A}u\|_2^2\!\!\!\!&=\displaystyle\int_{\mathbb{R}}|W_u(t,u)|^2dt\\
&\leq\displaystyle\nu^2\|u\|_\infty^{2(\nu-1)}\int_{\mathbb{R}}|a(t)|^2dt\\
&\leq\displaystyle\nu^2\beta_\infty^{2(\nu-1)}\|u\|^{2(\nu-1)}\int_{\mathbb{R}}|a(t)|^\mu dt<\infty.
\end{array}\eqno(2.11)$$
In the case of $2<\mu\leq\frac{2}{3-2\nu}$, then
$$\begin{array}{ll}
\displaystyle\|\mathcal{A}u\|_2^2\!\!\!\!&=\displaystyle\int_{\mathbb{R}}|W_u(t,u)|^2dt\\
&\leq\displaystyle\nu^2\left(\int_{\mathbb{R}}|a(t)|^\mu dt\right)^{\frac2\mu}\left(\int_{\mathbb{R}}|u|^{2\bar\mu(\nu-1)} dt\right)^{\frac{1}{\bar\mu}}\\
&\leq\displaystyle\nu^2\|u\|_{2\bar\mu(\nu-1)}^{2(\nu-1)}\left(\int_{\mathbb{R}}|a(t)|^\mu dt\right)^{\frac2\mu}\\
&\leq\displaystyle\nu^2\beta_{2\bar\mu(\nu-1)}^{2(\nu-1)}\|u\|^{2(\nu-1)}\left(\int_{\mathbb{R}}|a(t)|^\mu dt\right)^{\frac2\mu}<\infty,
\end{array}\eqno(2.12)$$
where $\frac2\mu+\frac{1}{\bar\mu}=1$ and $2\bar\mu(\nu-1)\geq1$ because of $\mu\leq\frac{2}{3-2\nu}$. If $\frac32\leq\nu<2$, combining the fact that $2(\nu-1)\geq1$ and H\"older inequality, similar to the proof of (2.11) and (2.12), we can get the same result. Consequently, $u\in\mathscr{D}(\mathcal{A})$ and hence $u$ is a homoclinic solution of (HS) by Lemma 2.2. The proof is complete.
\end{proof}

In the next argument, the following variant fountain theorem will be used to prove our main results. Let $E$ be a Banach space with the norm $\|\cdot\|$ and $E=\overline{\bigoplus_{j\in\mathbb{N}}X_j}$ with dim$X_j<\infty$ for any $j\in\mathbb{N}$. We write $Y_k=\bigoplus_{j=1}^kX_j$ and $Z_k=\overline{\bigoplus_{j=k}X_j}$. The $C^1$-functional $\Phi_\lambda:E\rightarrow\mathbb{R}$ is given by
$$\Phi_\lambda(u):=A(u)-\lambda B(u),\qquad \lambda\in[1,2].$$
\begin{thm} ([22, Theorem2.2.]) Assume that the functional $\Phi_\lambda$ defined above satisfies\\
(F1) $\Phi_\lambda$ maps bounded sets to bounded sets uniformly for $\lambda\in[1,2]$. Furthermore, $\Phi_\lambda(-u)=\Phi_\lambda(u)$ for all
\par\quad $(\lambda,u)\in[1,2]\times E$;\\
(F2) $B(u)\geq0$; $B(u)\rightarrow\infty$ as $\|u\|\rightarrow\infty$ on any finite dimensional subspace of $E$;\\
(F3) There exist $\rho_k>r_k>0$ such that
$$a_k(\lambda):=\inf_{u\in Z_k,\|u\|=\rho_k}\Phi_\lambda(u)\geq0>b_k(\lambda):=\max_{u\in Y_k,\|u\|=r_k}\Phi_\lambda(u),\quad \forall\ \lambda\in[1,2]$$
and
$$d_k(\lambda):=\inf_{u\in Z_k,\|u\|\leq\rho_k}\Phi_\lambda(u)\rightarrow0\quad as\ k\rightarrow\infty\ uniformly\ for\ \lambda\in[1,2].$$
Then there exist $\lambda_n\rightarrow1$, $u_{\lambda_n}\in Y_n$ such that
$$\Phi'_{\lambda_n}|_{Y_n}(u_{\lambda_n})=0,\ \Phi_{\lambda_n}(u_{\lambda_n})\rightarrow c_k\in[d_k(2),b_k(1)]\quad as\quad n\rightarrow\infty.$$
In particular, if $\{u_{\lambda_n}\}$ has a convergent subsequence for every $k$, then $\Phi_1$ has infinitely many nontrivial critical points
$\{u_k\}\in E\backslash\{0\}$ satisfying $\Phi_1(u_k)\rightarrow0^-$ as $k\rightarrow\infty$.
\end{thm}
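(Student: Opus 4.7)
The plan is to combine a genus-based symmetric link argument on each finite-dimensional subspace $Y_n$ with Zou's monotonicity trick in the parameter $\lambda$, in order to extract the desired constrained critical points.

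First, for each $n\geq k$, I would restrict $\Phi_\lambda$ to $Y_n$ and exploit the symmetric linking geometry supplied by (F1) and (F3): on $Y_k\subset Y_n$ the functional is uniformly negative on the sphere of radius $r_k$, while on $Z_k$ it is nonnegative on the sphere of radius $\rho_k$. Using Krasnoselskii's genus together with the Borsuk--Ulam intersection lemma, I would introduce a min-max value of the form
$$c_k^n(\lambda)=\inf_{\gamma\in\Gamma_k^n}\max_{u\in B_k}\Phi_\lambda(\gamma(u)),$$
where $B_k=\overline{B_{\rho_k}}\cap Y_k$ and $\Gamma_k^n$ consists of odd continuous maps $\gamma\colon B_k\to Y_n$ that equal the identity on $\partial B_k$. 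The link condition (F3) forces every admissible image $\gamma(B_k)$ to meet the cap $\partial B_{\rho_k}\cap Z_k$, so $d_k(\lambda)\leq c_k^n(\lambda)$ and in particular $c_k^n(\lambda)\geq a_k(\lambda)\geq 0$; taking $\gamma=\mathrm{id}$ gives the upper control $c_k^n(\lambda)\leq\max_{B_k}\Phi_\lambda$.

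Next, I would run the monotonicity trick. Since $B(u)\geq 0$ and $\Phi_\lambda=A-\lambda B$, for each fixed $u$ the map $\lambda\mapsto\Phi_\lambda(u)$ is non-increasing, hence so is $\lambda\mapsto c_k^n(\lambda)$ on $[1,2]$. Monotone real-valued functions are differentiable almost everywhere; at any such differentiability point $\lambda$, a standard difference-quotient computation, combined with an equivariant quantitative deformation lemma on the finite-dimensional $Y_n$, produces a bounded almost-critical sequence of $\Phi_\lambda|_{Y_n}$ at level $c_k^n(\lambda)$, with the bound depending only on $(c_k^n)'(\lambda)$. Finite-dimensionality of $Y_n$ then upgrades this sequence, along a subsequence, to an actual critical point $u_\lambda^n\in Y_n$ with $\Phi_\lambda|_{Y_n}'(u_\lambda^n)=0$ and $\Phi_\lambda(u_\lambda^n)=c_k^n(\lambda)$; the genus-based lower bound rules out $u_\lambda^n=0$.

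Finally, I would select a sequence $\lambda_n\downarrow 1$ of such good parameters and diagonalise the construction so that the resulting critical points lie in $Y_n$. The inequalities $d_k(\lambda)\leq c_k^n(\lambda)\leq b_k(1)$, together with $d_k(\lambda)\to 0$ uniformly in $\lambda$, pin the limit $c_k:=\lim_n\Phi_{\lambda_n}(u_{\lambda_n})$ into the interval $[d_k(2),b_k(1)]$, yielding the first conclusion. For the second, if $\{u_{\lambda_n}\}$ has a convergent subsequence for every $k$, its limit $u_k$ satisfies $\Phi'_1(u_k)=0$ and $\Phi_1(u_k)=c_k\leq b_k(1)<0$; since $d_k(2)\to 0$, we obtain infinitely many nontrivial critical points with $\Phi_1(u_k)\to 0^-$. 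The main obstacle, as I see it, is the delicate interplay between the almost-everywhere differentiability of $c_k^n(\lambda)$ and a $\mathbb{Z}_2$-equivariant deformation lemma that respects both the genus constraint and the restriction to $Y_n$; once that machinery is in place, the link geometry, the finite-dimensional compactness and the diagonal limit are all fairly standard.
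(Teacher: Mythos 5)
First, a point of comparison: the paper contains no proof of this statement at all. Theorem 2.7 is quoted verbatim from Zou [22] and used as a black box, so the only meaningful benchmark is the proof of the variant (dual) fountain theorem in that reference. Against that benchmark, your overall skeleton is the right one: Galerkin restriction to $Y_n$, a $\mathbb{Z}_2$-equivariant minimax, monotonicity of $\lambda\mapsto c_k^n(\lambda)$ coming from $B\geq 0$, almost-everywhere differentiability to produce bounded almost-critical sequences, finite-dimensional compactness, and a diagonal passage $\lambda_n\rightarrow 1$.

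There is, however, a genuine gap in the linking step, and it is fatal as written. With your class $\Gamma_k^n$ of odd maps on $B_k=\overline{B_{\rho_k}}\cap Y_k$ equal to the identity on $\partial B_k$, every admissible image contains $\partial B_{\rho_k}\cap Y_k$, hence meets $\partial B_{\rho_k}\cap Z_k$ (already through the $X_k$-directions), so indeed $c_k^n(\lambda)\geq a_k(\lambda)\geq 0$, exactly as you claim. But the theorem asserts that the levels converge to $c_k\in[d_k(2),b_k(1)]$ with $b_k(1)<0$ by (F3): the entire point of this variant is to produce \emph{negative} critical values accumulating at $0^-$ (which is what makes it usable for the subquadratic term in this paper). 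A minimax value bounded below by $a_k(\lambda)\geq 0$ cannot converge into $[d_k(2),b_k(1)]$. Your upper bound is equally unusable: taking $\gamma=\mathrm{id}$ gives only $c_k^n(\lambda)\leq\max_{B_k}\Phi_\lambda$, and since $0\in B_k$ and (for the functionals at hand) $\Phi_\lambda(0)=0$, this majorant is nonnegative and says nothing about the required estimate $c_k^n(\lambda)\leq b_k(\lambda)<0$. In short, you have set up the geometry of the ordinary fountain theorem (large positive levels: small sphere in $Z_k$, large ball in $Y_k$), whereas (F3) is the \emph{dual} geometry: the small sphere $\|u\|=r_k$ lies in $Y_k$ where $\Phi_\lambda<0$, the large sphere $\|u\|=\rho_k$ lies in $Z_k$ where $\Phi_\lambda\geq 0$, the admissible class must be built on $\overline{B_{r_k}}\cap Y_k$, the intersection lemma is used only to obtain the lower bound $d_k(\lambda)$ (intersection with $Z_k\cap\overline{B_{\rho_k}}$, not with the sphere $\partial B_{\rho_k}\cap Z_k$), and the upper bound $b_k(\lambda)$ requires a separate deformation/comparison argument rather than insertion of the identity map. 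Until the minimax class and the two-sided estimate $d_k(\lambda)\leq c_k^n(\lambda)\leq b_k(\lambda)$ are repaired, the remaining machinery (monotonicity trick, diagonalisation, and the conclusion $\Phi_1(u_k)\rightarrow 0^-$) has nothing correct to act on.
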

In order to make use of Theorem 2.7, we consider the functionals $A$, $B$ and $\Phi_\lambda$ on the working space defined $E=\mathscr{D}(|\mathcal{A}|^{1/2})$ by
$$A(u)=\frac12\|u^+\|^2,\qquad B(u)=\frac12\|u^-\|^2+\int_\mathbb{R}W(t,u)dt,\eqno(2.13)$$
and
$$\Phi_\lambda(u)=A(u)-\lambda B(u)=\frac12\|u^+\|^2-\lambda\left(\frac12\|u^-\|^2+\int_\mathbb{R}W(t,u)dt\right)\eqno(2.14)$$
for all $u=u^-+u^0+u^+\in E$ and $\lambda\in[1,2]$. By Lemma 2.6, it is clear that $\Phi_\lambda\in C^1(E,\mathbb{R})$ for all $\lambda\in[1,2]$. Let $X_j:=\mathbb{R}e_j=$span$\{e_j\}$, $j\in\mathbb{N}$, where $\{e_j, j\in\mathbb{N}\}$ is the system of eigenfunctions and the orthogonal basis in $L^2$ below Lemma 2.2. Furthermore, it is evident that $\Phi_1=\Phi$, where $\Phi$ is the functional defined in (2.4).

\section{Proof of theorems}

\begin{lem}
Let (L1), (L2) and (W) hold, then $B(u)\geq0$. Moreover, $B(u)\rightarrow\infty$ as $\|u\|\rightarrow\infty$ on any finite dimensional subspace of $E$.
\end{lem}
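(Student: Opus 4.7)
The first claim $B(u)\geq0$ is immediate. Under hypothesis (W) we have $a(t)\geq 0$ and $|u|^\nu\geq 0$, so $\int_\mathbb{R}W(t,u)\,dt=\int_\mathbb{R}a(t)|u|^\nu\,dt\geq 0$; combined with $\tfrac12\|u^-\|^2\geq 0$, this gives $B(u)\geq 0$.

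For the coercivity on a finite-dimensional subspace $F\subset E$, the plan is to argue by contradiction, exploiting the fact that $\Psi(u)=\int_\mathbb{R}a(t)|u|^\nu\,dt$ is $\nu$-homogeneous: indeed $\Psi(\lambda u)=|\lambda|^\nu\Psi(u)$ for any scalar $\lambda$. Suppose there exists $\{u_n\}\subset F$ with $\|u_n\|\to\infty$ and $\{B(u_n)\}$ bounded, say $B(u_n)\leq M$. Set $v_n:=u_n/\|u_n\|$; then $\|v_n\|=1$, and since $F$ is finite-dimensional, its unit sphere is compact, so a subsequence (still denoted $\{v_n\}$) converges in $F$ to some $v$ with $\|v\|=1$, in particular $v\not\equiv 0$.

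By the homogeneity, $\Psi(u_n)=\|u_n\|^\nu\Psi(v_n)$. Since $B(u_n)\geq\Psi(u_n)$ and $B(u_n)\leq M$, we obtain
$$\Psi(v_n)\leq\frac{M}{\|u_n\|^\nu}\longrightarrow 0.$$
On the other hand, Lemma 2.6 ensures $\Psi\in C^1(E,\mathbb{R})$ and in particular $\Psi$ is continuous, so $\Psi(v_n)\to\Psi(v)$, forcing $\Psi(v)=0$. Because $a(t)>0$ on $\mathbb{R}$ (per condition (W)), the vanishing $\int a(t)|v(t)|^\nu\,dt=0$ forces $v\equiv 0$ a.e., contradicting $\|v\|=1$ (recall the continuous embedding $E\hookrightarrow L^2$, so $\|v\|=1$ prevents $v$ from vanishing almost everywhere).

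The only subtle point, and the one I would watch for, is the strict positivity $\Psi(v)>0$ for $v\not\equiv 0$. It relies squarely on reading $a:\mathbb{R}\to\mathbb{R}^+$ as strictly positive, so that $a(t)|v(t)|^\nu=0$ a.e.\ truly forces $v=0$ a.e. If only $a\geq 0$ were allowed, one would instead have to invoke a measure-of-concentration estimate on the finite-dimensional $F$ (producing $\varepsilon_F>0$ with $\mathrm{meas}\{t:a(t)|v(t)|^\nu\geq\varepsilon_F\|v\|^\nu\}\geq\varepsilon_F$ uniformly over $v\in F$ with $\|v\|=1$) to salvage the argument; but under the stated hypothesis the homogeneity-plus-continuity route above is the cleanest.
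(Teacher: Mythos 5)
Your proof is correct, and it takes a different route from the paper's. The paper proves coercivity by first asserting the uniform measure--concentration estimate (3.1) --- there is $\varepsilon>0$ with $\mathrm{meas}\{t: a(t)|u(t)|^\nu\geq\varepsilon\|u\|^\nu\}\geq\varepsilon$ for all $u\in F\setminus\{0\}$ (proof deferred to the reference [SCN]) --- and then integrating over $\Omega_u$ to get the explicit lower bound $B(u)\geq\varepsilon^2\|u\|^\nu$; it also treats $2\leq\mu<\infty$ and $\mu=\infty$ as separate cases. You instead exploit the $\nu$-homogeneity of $\Psi$ together with compactness of the unit sphere of $F$ and the continuity of $\Psi$ from Lemma 2.6, running a contradiction argument; this is cleaner, needs no case split in $\mu$, and avoids the unproved measure estimate entirely. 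The two arguments are essentially equivalent in strength: your compactness step could be stated directly as $\delta:=\min\{\Psi(v): v\in F,\ \|v\|=1\}>0$, whence $B(u)\geq\Psi(u)\geq\delta\|u\|^\nu$, which is the same kind of quantitative bound the paper extracts from (3.1) and reuses later (in Lemma 3.3 and in (3.13)). You are also right to flag the one genuinely delicate point: both proofs hinge on $\Psi(v)>0$ for $v\neq0$, i.e.\ on reading $a:\mathbb{R}\to\mathbb{R}^+$ as strictly positive (the paper's (3.1) silently requires the same), and your observation that $v=0$ a.e.\ forces $v=0$ in $E$ is justified because $E\subset L^2$ as a set, so the two zero elements coincide.
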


\begin{proof} By definitions of the functional $B$ and (W), $B(u)\geq0$ holds obviously. Next we will prove that $B(u)\rightarrow\infty$ as $\|u\|\rightarrow\infty$ on any finite dimensional subspace of $E$. First we claim that for any finite dimensional subspace $F\subset E$, there exists $\varepsilon>0$ such that
$$meas\{t\in\mathbb{R}: a(t)|u(t)|^\nu\geq\varepsilon\|u\|^\nu\}\geq\varepsilon,\qquad \forall\ u\in F\backslash\{0\}.\eqno(3.1)$$
The proof of (3.1) is very similar as that of \cite{SCN}. We omit it here. Now, let
$$\Omega_u=\{t\in\mathbb{R}: a(t)|u(t)|^\nu\geq\varepsilon\|u\|^\nu\},\qquad \forall\ u\in F\backslash\{0\},\eqno(3.2)$$
where $\varepsilon$ is given in (3.1). From (3.1), we can obtain that
$$meas(\Omega_u)\geq\varepsilon,\qquad \forall\ u\in F\backslash\{0\},\eqno(3.3)$$
Combining (W) and (3.3), for all $u\in F\backslash\{0\}$, we can see that
$$\begin{array}{ll}
\displaystyle B(u)\!\!\!\!&=\displaystyle\frac12\|u^-\|^2+\int_\mathbb{R}W(t,u)dt\\
&\geq\displaystyle\int_{\Omega_u}a(t)|u(t)|^\nu dt\\
&\geq\varepsilon\|u\|^\nu meas(\Omega_u)\geq\varepsilon^2\|u\|^\nu.
\end{array}\eqno(3.4)$$
This implies $B(u)\rightarrow\infty$ as $\|u\|\rightarrow\infty$ on any finite dimensional subspace of $E$. If $\mu=\infty$, similar to the case of $2\leq\mu<\infty$, by the standard procedure, we can prove that there exists $\varepsilon_1>0$ such that
$$meas\{t\in\mathbb{R}: a(t)|u(t)|^\nu\geq\varepsilon_1\|u\|^\nu\}\geq\varepsilon_1,\qquad \forall\ u\in F\backslash\{0\}.\eqno(3.5)$$
Therefore, by (3.4), we can conclude that $B(u)\rightarrow\infty$ as $\|u\|\rightarrow\infty$ on any finite dimensional subspace of $E$. The proof is complete.
\end{proof}

\begin{lem}
Under the conditions in Theorem 1.1, then there exists a sequence $\rho_k\rightarrow0^+$ as $k\rightarrow\infty$ such that
$$a_k(\lambda):=\inf_{u\in Z_k,\|u\|=\rho_k}\Phi_\lambda(u)\geq0,\quad \forall\ \lambda\in[1,2],\ k\geq \bar n+1,$$
and
$$d_k(\lambda):=\inf_{u\in Z_k,\|u\|\leq\rho_k}\Phi_\lambda(u)\rightarrow0\quad as\ k\rightarrow\infty\ uniformly\ for\ \lambda\in[1,2].$$
where $Z_k=\overline{\bigoplus_{j=k}X_j}$ for all $k\in\mathbb{N}$.
\end{lem}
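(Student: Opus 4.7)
For $k\geq \bar n+1$, the subspace $Z_k=\overline{\bigoplus_{j\geq k}X_j}$ lies entirely in $E^+$, so $u^-=u^0=0$ and $\|u\|=\|u^+\|$ for every $u\in Z_k$. Therefore, for all $u\in Z_k$ and $\lambda\in[1,2]$,
\begin{equation*}
\Phi_\lambda(u)=\tfrac12\|u\|^2-\lambda\int_{\mathbb{R}}a(t)|u|^\nu\,dt \;\geq\; \tfrac12\|u\|^2-2\int_{\mathbb{R}}a(t)|u|^\nu\,dt.
\end{equation*}
My plan is to dominate the nonlinear term by a $k$-dependent multiple of $\|u\|^\nu$ whose constant goes to zero, and then optimize the radius of the sphere.

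The crucial auxiliary quantity is
\begin{equation*}
\eta_k:=\sup_{u\in Z_k,\;\|u\|=1}\|u\|_p,\qquad p:=\begin{cases}\nu\mu^*,&2\leq\mu<\infty,\\ \nu,&\mu=\infty,\end{cases}
\end{equation*}
with $1/\mu+1/\mu^*=1$. Since $\alpha<1$ forces $2/(3-\alpha)<1\leq p$, Lemma 2.1 yields that $E\hookrightarrow L^p$ is compact. I would then show $\eta_k\to 0$ as follows: pick near-maximizers $u_k\in Z_k$ with $\|u_k\|=1$; because the eigenfunctions $\{e_j\}$ are pairwise $E$-orthogonal (they lie in distinct eigenspaces of $\mathcal{A}$) and span a dense subspace of $E$, the condition $u_k\in Z_k$ forces $(u_k,w)\to 0$ for every $w\in E$, i.e.\ $u_k\rightharpoonup 0$ weakly in $E$; compactness then gives $u_k\to 0$ in $L^p$, so $\eta_k\to 0$.

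By H\"older's inequality (or a direct bound when $\mu=\infty$),
\begin{equation*}
\int_{\mathbb{R}}a(t)|u|^\nu\,dt\leq \|a\|_\mu\|u\|_p^\nu\leq \|a\|_\mu\eta_k^\nu\|u\|^\nu,\qquad u\in Z_k,
\end{equation*}
and hence $\Phi_\lambda(u)\geq \tfrac12\|u\|^2-2\|a\|_\mu\eta_k^\nu\|u\|^\nu$ on $Z_k$, uniformly in $\lambda\in[1,2]$. Since $1<\nu<2$, I would set
\begin{equation*}
\rho_k:=\bigl(8\|a\|_\mu\eta_k^\nu\bigr)^{1/(2-\nu)},
\end{equation*}
which satisfies $\rho_k\to 0^+$. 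A short computation based on $\rho_k^{2-\nu}=8\|a\|_\mu\eta_k^\nu$ then gives $\Phi_\lambda(u)\geq \tfrac14\rho_k^2\geq 0$ on the sphere $\|u\|=\rho_k$, establishing $a_k(\lambda)\geq 0$. On the ball $\|u\|\leq\rho_k$ the same estimate yields $\Phi_\lambda(u)\geq -2\|a\|_\mu\eta_k^\nu\rho_k^\nu=-\tfrac14\rho_k^2$, and since $0\in Z_k$ and $\Phi_\lambda(0)=0$ one obtains $-\tfrac14\rho_k^2\leq d_k(\lambda)\leq 0$, so $d_k(\lambda)\to 0$ uniformly in $\lambda$.

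The only substantive step is verifying $\eta_k\to 0$; everything else is a routine H\"older-plus-scaling computation. This is precisely where the compactness in Lemma 2.1, and therefore the hypothesis (L1) with $\alpha<1$, is essential: without compactness one could not make the $L^p$-norm uniformly small on the tail spaces $Z_k$. The subquadratic condition $\nu<2$ is equally crucial, since it is what makes $\rho_k$ well-defined and what forces $\rho_k\to 0$ as $\eta_k\to 0$.
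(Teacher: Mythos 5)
Your proposal is correct and follows essentially the same route as the paper: reduce to $Z_k\subset E^+$, define $\eta_k=\sup_{u\in Z_k,\|u\|=1}\|u\|_{\nu\mu^*}$ (resp.\ $\|u\|_\nu$ for $\mu=\infty$), use the compact embedding of Lemma 2.1 to get $\eta_k\to0$, and take $\rho_k=(8\eta_k^\nu\|a\|_\mu)^{1/(2-\nu)}$. The paper cites \cite{SCN} for the final scaling computation and the proof that $\eta_k\to0$, whereas you carry both out explicitly; your details check out.
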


\begin{proof} By the definition of $\bar n$ below the Lemma 2.2, we can know that $Z_k\subset E^+$ for all $k\geq\bar n+1$. Therefore, for all $k\geq\bar n+1$, from (W) and (2.14), it follows that
$$\begin{array}{ll}
\displaystyle\Phi_\lambda(u)\!\!\!\!&=\displaystyle\frac12\|u\|^2-\lambda\int_\mathbb{R}W(t,u)dt\\
[-1ex]\\[-1ex]
&\geq\displaystyle\frac12\|u\|^2-2\int_\mathbb{R}W(t,u)dt\\
[-1ex]\\[-1ex]
&=\displaystyle\frac12\|u\|^2-2\int_\mathbb{R}a(t)|u|^\nu dt,\qquad \forall\ (\lambda,u)\in[1,2]\times Z_k.
\end{array}\eqno(3.6)$$
If $2\leq\mu<\infty$, let $\eta_k:=\sup\limits_{u\in Z_k,\|u\|=1}\|u\|_{\nu\mu^*}$, where $\frac1\mu+\frac{1}{\mu^*}=1$. By Lemma 2.1, we can conclude that $\eta_k\rightarrow0$ as $k\rightarrow\infty$. Therefore, combining (3.6) with (W), we have
$$\Phi_\lambda(u)\geq\frac12\|u\|^2-2\|a\|_\mu\|u\|_{\nu\mu^*}^\nu\geq\frac12\|u\|^2-2\eta_k^\nu\|a\|_\mu\|u\|^\nu,\qquad \forall\ (\lambda,u)\in[1,2]\times Z_k.\eqno(3.7)$$
Let $\rho_k:=(8\eta_k^\nu\|a\|_\mu)^{1/(2-\nu)}$, the rest of proof is very similar as that of \cite{SCN}. We omit it here.
For the case of $\mu=\infty$, similar to the above procedure, the same result can be obtained. We omit it here.
The proof is complete.
\end{proof}

\begin{lem}Assume that (L1), (L2) and (W) hold, then for the sequence $\{\rho_k\}_{k\in\mathbb{N}}$ obtained in Lemma 3.2, there exists a
sequence $\{r_k\}_{k\in\mathbb{N}}$ such that $\rho_k>r_k>0$ for $\forall\ k\in\mathbb{N}$ and
$$b_k(\lambda):=\max_{u\in Y_k,\|u\|=r_k}\Phi_\lambda(u)<0,\quad \forall\ \lambda\in[1,2].\eqno(3.8)$$
where $Y_k=\bigoplus_{j=1}^kX_j=\mbox{span}\{e_1,\ldots,e_k\}$ for $\forall\ k\in\mathbb{N}$.
\end{lem}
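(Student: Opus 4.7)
The plan is to exploit the subquadratic growth of $W$ (since $\nu<2$) together with the lower bound on $B$ available on finite-dimensional subspaces from Lemma~3.1, in order to make the positive quadratic term $\tfrac12\|u^+\|^2$ of $\Phi_\lambda$ negligible compared to $\lambda B(u)$ when $\|u\|$ is small; the required $r_k$ will then arise by balancing these two contributions on $Y_k$, which is finite-dimensional.

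First, I would apply Lemma~3.1 to the finite-dimensional subspace $F=Y_k$: the claim (3.1) together with (3.3)--(3.4) in its proof produces an $\varepsilon_k>0$ (depending on $k$) with
$$B(u)\ \geq\ \varepsilon_k^{2}\,\|u\|^{\nu},\qquad \forall\,u\in Y_k\setminus\{0\}.$$
In the case $\mu=\infty$ the same conclusion follows from (3.5) by the identical argument, so both cases are treated uniformly.

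Next, I would estimate $\Phi_\lambda$ on $Y_k$. Using the orthogonal decomposition $\|u\|^2=\|u^+\|^2+\|u^0\|^2+\|u^-\|^2$ from Remark~2.4, the resulting inequality $\|u^+\|\leq\|u\|$, and $\lambda\geq 1$, I get
$$\Phi_\lambda(u)\ =\ \tfrac12\|u^+\|^2-\lambda B(u)\ \leq\ \tfrac12\|u\|^2-B(u)\ \leq\ \tfrac12\|u\|^2-\varepsilon_k^{2}\,\|u\|^{\nu}.$$
On the sphere $\|u\|=r_k$ this becomes $\Phi_\lambda(u)\leq r_k^{\nu}\bigl(\tfrac12\,r_k^{2-\nu}-\varepsilon_k^{2}\bigr)$, which, because $2-\nu>0$, is strictly negative as soon as $r_k^{2-\nu}<2\varepsilon_k^{2}$. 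I would therefore set
$$r_k\ :=\ \min\bigl\{\tfrac12\rho_k,\ \varepsilon_k^{2/(2-\nu)}\bigr\}\ >\ 0,$$
which simultaneously enforces $r_k<\rho_k$ and $\tfrac12\,r_k^{2-\nu}-\varepsilon_k^{2}\leq-\tfrac12\varepsilon_k^{2}<0$. Taking the supremum over $u\in Y_k$ with $\|u\|=r_k$ then gives $b_k(\lambda)<0$ for every $\lambda\in[1,2]$, as required.

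There is no serious obstacle here. The only subtlety is that the constant $\varepsilon_k$ furnished by Lemma~3.1 depends on the subspace $Y_k$ and hence on $k$; but this is harmless for the present statement because $r_k$ is itself allowed to depend on $k$. This is in sharp contrast to Lemma~3.2, where the additional requirement $d_k(\lambda)\to 0$ as $k\to\infty$ uniformly in $\lambda$ was what forced the delicate argument through the compact embedding constants $\eta_k$.
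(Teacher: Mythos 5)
Your proposal is correct and follows essentially the same route as the paper: bound $\tfrac12\|u^+\|^2\le\tfrac12\|u\|^2$, use the lower bound $B(u)\ge\varepsilon^2\|u\|^\nu$ on the finite-dimensional space $Y_k$ from Lemma 3.1 together with $\lambda\ge1$, and choose $r_k<\min\{\rho_k,\varepsilon^{2/(2-\nu)}\}$ so that the $\|u\|^\nu$ term dominates. If anything, you are slightly more careful than the paper in acknowledging that the constant from (3.1) depends on the subspace $Y_k$ (the paper writes a single $\varepsilon_0$), and you correctly observe that this dependence is harmless here.
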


\begin{proof}For $\forall\ k\in\mathbb{N}$, it is clear that $Y_k$ is a finite dimensional subspace of $E$. Therefore, for $\forall\ \lambda\in[1,2]$, from
(W), (3.2), (3.3) and (3.5), let $\varepsilon_0=\min\{\varepsilon,\varepsilon_1\}$, we have
$$\begin{array}{ll}
\displaystyle\Phi_\lambda(u)\!\!\!\!&=\displaystyle\frac12\|u^+\|^2-\lambda\left(\frac12\|u^-\|^2+\int_\mathbb{R}W(t,u)dt\right)\\
&\leq\displaystyle\frac12\|u\|^2-\int_\mathbb{R}W(t,u)dt\\
&\leq\displaystyle\frac12\|u\|^2-\int_{\Omega_u}a(t)|u|^\nu dt\\
&\leq\displaystyle\frac12\|u\|^2-\varepsilon_0\|u\|^\nu\mbox{meas}(\Omega_u)\\
&\leq\displaystyle\frac12\|u\|^2-\varepsilon_0^2\|u\|^\nu,\qquad\qquad\qquad\qquad\forall\ u\in Y_k,  k\in\mathbb{N}.
\end{array}\eqno(3.9)$$
For $\forall\ k\in\mathbb{N}$, we choose $0<r_k<\min\{\rho_k,\varepsilon_0^{\frac{2}{2-\nu}}\}$. From (3.9), an easy computation shows that
$$b_k(\lambda):=\max_{u\in Y_k,\|u\|=r_k}\Phi_\lambda(u)\leq-\frac{r_k^2}{2}<0,\qquad \forall\ k\in\mathbb{N}.$$
The proof is complete.
\end{proof}

\par Next we will present the proof of our main result.\\  \\
{\bf Proof of Theorem 1.1.} Combining Remark 2.5 and (2.14), it is clear that the condition (F1) in Theorem 2.7 holds obviously.
By Lemma 3.1, 3.2 and 3.3, we can easily see that conditions (F2) and (F3) in Theorem 2.7 hold for all $k\geq \bar n+1$. Consequently,
from Theorem 2.7, for all $k\geq \bar n+1$, there exist $\lambda_n\rightarrow1$, $u_{\lambda_n}\in Y_n$ such that
$$\Phi'_{\lambda_n}|_{Y_n}(u_{\lambda_n})=0,\ \Phi_{\lambda_n}(u_{\lambda_n})\rightarrow c_k\in[d_k(2),b_k(1)]\quad\mbox{as}\  n\rightarrow\infty.\eqno(3.10)$$
\par In what follows, the fist step is to show that $\{u_{\lambda_n}\}$ is bounded in $E$. For the case of $2\leq\mu<\infty$, since
$\Phi'_{\lambda_n}|_{Y_n}(u_{\lambda_n})=0$, by (2.6) and (2.14), we have
$$\Phi'_{\lambda_n}|_{Y_n}(u_{\lambda_n})u_{\lambda_n}^+=\|u_{\lambda_n}^+\|^2-\lambda_n\int_\mathbb{R}\left(W_u(t,u_{\lambda_n}),u_{\lambda_n}^+\right)dt=0.\eqno(3.11)$$
Therefore, using (W) and the H\"older inequality, it follows that
$$\begin{array}{ll}
\displaystyle\|u_{\lambda_n}^+\|^2\!\!\!\!&=\displaystyle\lambda_n\int_\mathbb{R}\left(W_u(t,u_{\lambda_n}),u_{\lambda_n}^+\right)dt\\
&\leq\displaystyle2\int_\mathbb{R}|a(t)||u_{\lambda_n}|^{\nu-1}|u_{\lambda_n}^+|dt\\
&\leq\displaystyle2\left(\int_\mathbb{R}|a(t)|^\mu dt\right)^{\frac1\mu}\left(\int_\mathbb{R}|u_{\lambda_n}|^{\mu^*(\nu-1)}|u_{\lambda_n}^+|^{\mu^*}dt\right)^{\frac{1}{\mu^*}}\\
&\leq\displaystyle2\nu\|a\|_\mu\left(\int_\mathbb{R}|u_{\lambda_n}|^2dt\right)^{\frac{\nu-1}{2}}\left(\int_\mathbb{R}|u_{\lambda_n}^+|^\frac{2\mu^*}{2+\mu^*-\mu^*\nu}dt\right)^{\frac{2+\mu^*-\mu^*\nu}{2\mu^*}}
\\
&=\displaystyle2\nu\|a\|_\mu\|u_{\lambda_n}\|_2^{\nu-1}\|u_{\lambda_n}^+\|_{\frac{2\mu^*}{2+\mu^*-\mu^*\nu}}\\
&\leq\displaystyle M_1\|a\|_\mu\|u_{\lambda_n}\|^\nu
\end{array}\eqno(3.12)$$
for some $M_1>0$, where $\frac1\mu+\frac{1}{\mu^*}=1$, $\frac{2\mu^*}{2+\mu^*-\mu^*\nu}\geq1$ and the last inequality holds because of (2.1).  Furthermore, combing (2.6) with (3.10) and the H\"older inequality, we have
$$\begin{array}{ll}
\displaystyle-\Phi_{\lambda_n}(u_{\lambda_n})\!\!\!\!&=\displaystyle\frac12\Phi'_{\lambda_n}|_{Y_n}(u_{\lambda_n})u_{\lambda_n}-\Phi_{\lambda_n}(u_{\lambda_n})\\
&=\displaystyle\lambda_n(1-\frac\nu2)\int_\mathbb{R}|a(t)||u_{\lambda_n}|^\nu dt\\
&\geq\displaystyle\frac{1}{2^{\nu-1}}\lambda_n(1-\frac\nu2)\int_\mathbb{R}|a(t)||u_{\lambda_n}^-+u_{\lambda_n}^0|^\nu dt-\lambda_n(1-\frac\nu2)\int_\mathbb{R}|a(t)||u_{\lambda_n}^+|^\nu dt\\
&\geq\displaystyle\frac{\varepsilon^2}{2^{\nu-1}}\lambda_n(1-\frac\nu2)\|u_{\lambda_n}^-+u_{\lambda_n}^0\|^\nu-\lambda_n(1-\frac\nu2)\|a\|_\mu\|u_{\lambda_n}^+\|_{\mu^*\nu}^\nu
\end{array}\eqno(3.13)$$
where the last inequality holds by the fact that dim$(E^-\oplus E^0)<\infty$ and (3.1). Note that $1<\nu<2$, then (3.12) and (3.13) implies that $\{\|u_{\lambda_n}^+\|\}$ is bounded. Next, we just have to show that $\{\|u_{\lambda_n}^-+u_{\lambda_n}^0\|\}$ is also bounded. Consequently, from (3.13) and (2.1), we get
$$\|u_{\lambda_n}^-+u_{\lambda_n}^0\|^\nu\leq-M_2\Phi_{\lambda_n}(u_{\lambda_n})+M_3\|u_{\lambda_n}^+
\|_{\mu^*\nu}^\nu\leq-M_2\Phi_{\lambda_n}(u_{\lambda_n})+M_4\|u_{\lambda_n}^+
\|^\nu\eqno(3.14)$$
for some positive constants $M_2$, $M_3$ and $M_4$. Notice that $\{\|u_{\lambda_n}^+\|\}$ is bounded, by (3.10), we can conclude that $\{\|u_{\lambda_n}^-+u_{\lambda_n}^0\|\}$ is also bounded. Therefore, there exists $M_5>0$ such that
$\|u_{\lambda_n}\|^2=\|u_{\lambda_n}^+\|^2+\|u_{\lambda_n}^-+u_{\lambda_n}^0\|^2\leq M_5$, i.e. $\{u_{\lambda_n}\}$ is bounded in $E$.

Finally, we prove that $\{u_{\lambda_n}\}$ has a strong convergent subsequence in $E$. The proof of this assertion can be accomplished as that of \cite{SCN}. We omit it here.

Now by the last conclusion of Theorem 2.7, we obtain that $\Phi=\Phi_1$ has infinitely many nontrivial critical points. Consequently, (HS) possesses infinitely many homoclinic solutions by Lemma 2.6. The proof of Theorem 1.1 is complete.\quad\quad$\Box$
\begin{rmk}
In this paper, we have considered the existence of infinitely many homoclinic solutions for a class of subquadratic
second-order Hamiltonian systems, where $1<\nu<\frac32$ is allowed. We view this result as merely one first step in the theory for the case of $1<\nu<\frac32$, there are still many problems to pursue. For example, when $1<\nu<\frac32$, the upper bound of $\mu$ whether can be $\infty$, what we will discuss in the future study.
\end{rmk}
\noindent{\bf Acknowledgements}
\par The authors would like to thank the reviewer for their
valuable comments and suggestions.\\

\end{document}